\documentclass[a4paper, 11pt]{amsart}
\usepackage{a4wide}

\parskip=\smallskipamount

\newtheorem{theorem}{Theorem}[section]

\newtheorem{corollary}[theorem]{Corollary}
\newtheorem{proposition}[theorem]{Proposition}

\theoremstyle{definition}
\newtheorem{definition}[theorem]{Definition}

\newcommand{\C}{\mathbb{C}}

\newcommand{\N}{\mathbb{N}}

\begin{document}

\title{The complement of the closed unit ball in $\mathbb C^3$ is not subelliptic}

\author{Rafael B. Andrist}
\address{Bergische Universit\"at Wuppertal, Fachbereich C,
Gau{\ss}str. 20, D-42119 Wuppertal, Germany}
\email{rafael.andrist@math.uni-wuppertal.de}

\author{Erlend Forn{\ae}ss Wold}
\address{Matematisk Institutt, Universitetet i Oslo, Postboks 1053 Blindern, 0316 Oslo, Norway}
\email{erlendfw@math.uio.no}

\begin{abstract}
In this short note we show that $\mathbb C^n\setminus\overline{\mathbb B^n}$ is not subelliptic for $n \geq 3$. This is done by proving a Hartogs type extension theorem for holomorphic vector bundles bundles.
\end{abstract}

\maketitle

\section{Introduction}

Gromov \cite{Gromov-elliptic} introduced 1989 in his seminal paper the notion of an elliptic manifold and proved an Oka principle for holomorphic sections of elliptic bundles, generalizing previous work of Grauert \cite{Grauert-Lie} for complex Lie groups. The (basic) Oka principle of a complex manifold $X$ is that every continuous map $Y \to X$ from a Stein space $Y$ is homotopic to a holomorphic map.

In Oka theory exist many interesting classes of complex manifolds with weaker properties than ellipticity. However, all the known inclusion relations between these classes are yet not known to be proper inclusions. Following the book of Forstneri\v{c} \cite[Chap. 5]{Forstneric-book} we want to mention in particular the following classes and then prove that at least one of these inclusions has to be proper.

\begin{definition}
A \emph{spray} on a complex manifold $X$ is a triple $(E, \pi, s)$ consisting of a holomorphic vector bundle $\pi : E \to X$ and a holomorphic map $s: E \to X$  such that for each point $x \in X$ we have $s(0_x) = x$.

The spray $(E, \pi, s)$ is said to be \emph{dominating} if for every point $x \in X$ we have
\[
\mathrm{d}_{0_x} s (E_{x}) = \mathrm{T}_x X
\]
A complex manifold is called \emph{elliptic} if it admits a dominating spray.
\end{definition}

A weaker notion, subellipticity, was introduced later by Forstneri\v{c} \cite{Forstneric-subelliptic} where he proved the Oka principle for subelliptic manifolds:

\begin{definition}
A finite family of sprays $(E_j, \pi_j, s_j), j = 1, \dots, m,$ on $X$
is called \emph{dominating} if for every point $x \in X$ we have
\begin{equation}\label{dominating}
\mathrm{d}_{0_x} s_1 (E_{1,x}) + \dots + \mathrm{d}_{0_x} s_m (E_{m,x}) = \mathrm{T}_x X
\end{equation}
A complex manifold $X$ is called \emph{subelliptic} if it admits a finite dominating family of sprays.
\end{definition}

It is immediately clear from the definition that an elliptic manifold is subelliptic. However, the Oka principle holds under even weaker conditions. Forstneri\v{c} \cite{Forstneric-Oka1, Forstneric-Oka2, Forstneric-Oka3} showed that the following condition (CAP) is equivalent for a manifold to satisfy the Oka principle (and versions of the Oka principle with interpolation and approximation), hence justifing the name Oka manifold:

\begin{definition}
A complex manifold is said to satisfy the \emph{convex approximation property} (CAP) if
for on any compact convex set $K \subset \C^n, \; n \in \N,$ every holomorphic map $f : K \to X$ can be approximated, uniformly on $K$, by entire holomorphic maps $\C^n \to X$.  If this approximation property holds for a fixed $n \in \N$, then $X$ is said to satisfy $\mathrm{CAP}_n$.
A manifold satisfying CAP is called an \emph{Oka manifold}.
\end{definition}

A subelliptic manifold is always Oka. Whether an Oka manifold is (sub-)elliptic, is on the other hand not known -- this implication holds however under the extra assumption that it is Stein.

We want to mention also these weaker properties:

\begin{definition}
A complex manifold $X$ of dimension $n$ is called \emph{dominable} if there exists a point $x_0 \in X$ and a holomorphic map $f: \C^n \to X$ with $f(0) = x_0$ and $\mathrm{rank}\, \mathrm{d}_0 f = n$.
\end{definition}

\begin{definition}
A complex manifold $X$ of dimension $n$ is called \emph{strongly dominable} if for every  point $x_0 \in X$ there exists a holomorphic map $f: \C^n \to X$ with $f(0) = x_0$ and $\mathrm{rank}\, \mathrm{d}_0 f = n$.
\end{definition}

Summarizing the previous, the following inclusions are known:
\[
\mathrm{elliptic} \subseteq \mathrm{subelliptic} \subseteq \mathrm{Oka} \subseteq \mathrm{strongly \, dominable} \subseteq \mathrm{dominable}
\]

There are several known candidates to prove that one of this inclusions is proper.
We will present here an example of complex manifold which is not subelliptic, but strongly dominable. Whether or not it is Oka remains an open question.

\begin{theorem}\label{main}
For $n\geq 3$ the manifold $X:=\mathbb C^n\setminus\overline{\mathbb B^n}$ 
is not subelliptic.
\end{theorem}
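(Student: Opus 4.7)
The plan is to argue by contradiction: suppose $X = \C^n \setminus \overline{\B^n}$ is subelliptic and let $(E_j, \pi_j, s_j)_{j=1}^m$ be a finite dominating family of sprays. The main technical ingredient, and what I expect to be the principal obstacle, is a Hartogs-type extension theorem for holomorphic vector bundles in dimension $n \geq 3$: every holomorphic vector bundle $E_j$ on $X$ extends uniquely to a holomorphic vector bundle $\tilde E_j$ on $\C^n$. I would prove this by invoking Siu-type extension of coherent analytic sheaves across $\overline{\B^n}$ and then checking local freeness, using that $n \geq 3$ suffices to control the codimension-three singular locus of a reflexive coherent sheaf.

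Granted the bundle extension, I next extend each spray map $s_j : E_j \to X \hra \C^n$ to a holomorphic map $\tilde s_j : \tilde E_j \to \C^n$. In a local trivialization of $\tilde E_j$ over a neighborhood $U \supset \overline{\B^n}$, the components of $s_j$ are holomorphic functions on $(U \setminus \overline{\B^n}) \times \C^{r_j}$, and the Bochner--Martinelli integral formula extends them jointly-holomorphically across $\overline{\B^n} \times \C^{r_j}$ (with the fiber coordinate serving as a holomorphic parameter). Patching via uniqueness produces $\tilde s_j : \tilde E_j \to \C^n$ satisfying $\tilde s_j(0_x) = x$ for every $x \in \C^n$, i.e., an honest spray on $\C^n$ extending $s_j$.

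The key geometric step exploits the strict pseudoconvexity of $\partial \B^n$. Fix $x_0 \in \partial \B^n$ and set $f := \tilde s_j|_{\tilde E_{j,x_0}} : \tilde E_{j,x_0} \cong \C^{r_j} \to \C^n$. Because $\tilde s_j$ agrees with $s_j$ on fibers over $X$, continuity forces $f(\C^{r_j}) \subset \C^n \setminus \B^n$, while $f(0) = x_0 \in \partial \B^n$. Hence the plurisubharmonic function $\|f(\cdot)\|^2 - 1$ is nonnegative and vanishes at the origin, so comparing
\[
\|f(v)\|^2 - 1 = 2 \, \mathrm{Re} \, \langle x_0, d_0 f(v) \rangle + O(|v|^2)
\]
at $v$ and $-v$ forces $\langle x_0, d_0 f(v) \rangle = 0$ for all $v$. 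Thus the image of $d_0 f$ lies in $x_0^\perp = T^{\C}_{x_0} \partial \B^n$, the complex tangent space of complex dimension $n-1$, and
\[
\sum_{j=1}^{m} d_{0_{x_0}} \tilde s_j(\tilde E_{j, x_0}) \;\subset\; T^{\C}_{x_0} \partial \B^n
\]
has complex dimension strictly less than $n$ at every $x_0 \in \partial \B^n$.

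To conclude, pick $x_0 \in \partial \B^n$, a connected neighborhood $U \subset \C^n$ of $x_0$, and trivializations of each $\tilde E_j$ over $U$, so that the combined vertical differential at the zero section becomes a holomorphic matrix-valued function $\Phi$ on $U$. By the previous step, every $n \times n$ minor of $\Phi$ is a holomorphic function on $U$ vanishing on the real hypersurface $U \cap \partial \B^n$; since a non-vanishing holomorphic function has zero set of real codimension at least two, the identity principle forces every such minor to vanish identically on $U$. Therefore $\mathrm{rank}\, \Phi < n$ throughout $U$, contradicting the domination hypothesis $\mathrm{rank}\, \Phi = n$ on the nonempty open subset $U \cap X$.
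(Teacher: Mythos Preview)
Your overall strategy matches the paper's: extend the bundles and sprays across (part of) the boundary sphere, then derive a contradiction from the domination condition at boundary points. Your steps 3--5 are essentially the contrapositive of the paper's endgame. The paper argues that the non-domination locus of the extended family is a proper analytic subset of the small ball $B$, so there is a boundary point $q$ where the family still dominates; then one spray must push the fiber over $q$ into $\B^n$, hence also nearby exterior fibers, contradicting that $s_j$ maps into $X$. You instead show directly that the fiber image over any boundary point avoids $\B^n$, deduce $d_{0_{x_0}}\tilde s_j(\tilde E_{j,x_0})\subset x_0^\perp$, and then use the identity principle to propagate the rank drop from the real hypersurface $\partial\B^n$ to a full neighborhood. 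Both versions are correct and equivalent in spirit.

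The one genuine gap is your first step. Siu's theorem does produce a coherent extension $\tilde E_j$ across $\overline{\B^n}$, and after passing to the reflexive hull the singular locus has codimension $\geq 3$; but for $n=3$ this still permits isolated points in $\overline{\B^3}$ where $\tilde E_j$ fails to be locally free, and your sketch offers no mechanism to exclude them. (Note that if global extension as a vector bundle held, every holomorphic bundle on $\C^n\setminus\overline{\B^n}$ would be trivial, since holomorphic bundles on the Stein contractible space $\C^n$ are trivial by Grauert's Oka principle; that is a strong conclusion which at minimum requires a real argument.) The paper sidesteps this entirely by proving only a \emph{local} extension: there exist a boundary point $p$ and a small ball $B\ni p$ over which each $E_j$ trivializes and extends, and then each $s_j$ extends by ordinary Hartogs applied to the Taylor coefficients in the fiber variable. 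Your steps 3--5 use nothing beyond such a local extension near one boundary point, so once you downgrade the global claim to this local one, your argument goes through unchanged.
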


The proof of this relies on a Hartogs type extension result for holomorphic 
vector bundles in dimensions $n\geq 3$.   This extension result is not 
valid in dimension 2, so the question as to whether $\mathbb C^2\setminus\overline{\mathbb B}^2$
is elliptic or sub-elliptic remains open.  

\medskip

Recently, Forstneri\v{c} and Ritter \cite{ForstnericRitter} have proved that $X = \mathbb C^n \setminus \overline{\mathbb B^n}$
satisfies $\mathrm{CAP}_m$ for all $m<n$, but it remains an open question whether $X$ is an Oka-manifold.

\section{Proof of Theorem \ref{main}}

Assume that $E\rightarrow X$ is a holomorphic vector bundle with a 
spray $s:E\rightarrow X$. 
By Corollary \ref{extension2} there 
exists a point $p\in b\mathbb B^n$ and a (small) open ball $B$ centered at $p$
such that $E|_B$ is the trivial bundle $B\times\mathbb C^r$.  We use 
$(z,w)$ as coordinates on $B\times\mathbb C^r$.  On $(B\setminus\overline{\mathbb B}^n)\times\mathbb C^r$
we may write the spray $s=(s_1,...,s_n)$ as 
$$
s_j(z,w)=\sum_{\alpha} g^j_\alpha(z)w^\alpha, 
$$
where $\alpha$ is a multi-index. By the Hartogs extension theorem for 
the functions $g^j_\alpha$ we see that the spray extends to some 
neighborhood of $p$, and so by possibly having to choose 
a smaller $B$ we assume that $s$ extends to $B\times\mathbb C^r$.

\medskip

Given a finite number of sprays $(E_j,\pi_j,s_j)$ we may repeat the
argument for all of them simultaneously to find a ball $B$ around 
a point $p\in b\mathbb B^n$ to which the bundles and sprays 
extend.  We claim that the set of points $Z\subset B$ where 
the family of sprays is not dominating is an analytic set. 
To see this, choose holomorphic sections $\mathcal B_j=\{f^j_1,\dots,f^j_{k_j}\}$
for $j=1,\dots,m$, where $k_j=\dim(E_j)$ such that $\mathcal B_{j,z}$
forms a basis for $E_{j,z}$ for all $z\in B$.
Let $g^j_s(z)=\mathrm{d}_{0_z}s_j(f^j_s(z))$, 
and let $Z\subset B$ be the set of points in $B$ where the dimension 
of the span of all the vectors $g^j_s$  is less than $n$. 
The set $Z$ has to be a proper analytic subset of $B$ and 
so there exists a point $q \in b\mathbb B^n$ such that 
one of the sprays, say $s_1$ satisfies $\mathrm{d}_{0_q}s_1(E_{1,z})\cap\mathbb B^n\neq\emptyset$, 
which implies that $s_{1,z}(E_{1,z})\cap\mathbb B^n\neq\emptyset$.  
But then $s_1(E_{1,w})\cap\mathbb B^n\neq\emptyset$ for 
points $w$ close to $q$ which contradicts the assumption 
that $s_1$ is a spray into $X$.   
$\hfill\square$

The argument given relies on the following extension result for locally free sheaves by Siu \cite{Siu}.

\medskip

\begin{definition}
For $0<a<b\in\mathbb R^N$ we let $G^N(a,b)$ denote the set 
\begin{gather*}
G^N(a,b) = 
 \{z \in \C^N: |z_j|<b_j \mbox{ for } j = 1,\dots,N \mbox{ and } |z_j| > a_j \mbox{ for some } 1 \leq j \leq N \}.
\end{gather*}
\end{definition}

\begin{proposition}(Siu, \cite{Siu}, page 144)\label{unique}
Suppose $0\leq a\leq a'<b\in\mathbb R^N$ and $D$ is an 
open subset of $\mathbb C^n$.  Suppose $S_i$ is a coherent
analytic sheaf on $D\times G^N(a,b)$ such that $S_i=S_i^{[n]}$ $(i=1,2)$.
If $\varphi:S_1\rightarrow S_2$ is a sheaf-isomorphism on $D\times G^N(a',b)$
then $\varphi$ can be uniquely extended to a sheaf isomorphism $S_1\rightarrow S_2$ on $D\times G^N(a,b)$. 
\end{proposition}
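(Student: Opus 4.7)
The plan is to reinterpret the sheaf isomorphism $\varphi$ as a single global section of the coherent sheaf $\mathcal{H}om(S_1,S_2)$ over $D \times G^N(a',b)$, and then to extend that section to $D \times G^N(a,b)$ via a Hartogs-type extension theorem for coherent analytic sheaves. Coherence of $\mathcal{H}om(S_1,S_2)$ follows from coherence of the $S_i$, and the gap-sheaf identity $S_i = S_i^{[n]}$ should propagate to $\mathcal{H}om(S_1,S_2)$, yielding the depth hypothesis needed for the extension step.

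The key technical step, which I would establish first, is the following extension theorem: if $T$ is a coherent analytic sheaf on $D \times G^N(a,b)$ with $T = T^{[n]}$, then every section of $T$ over $D \times G^N(a',b)$ extends uniquely to a section over $D \times G^N(a,b)$. This enlarges the Hartogs figure from $G^N(a',b)$ to $G^N(a,b)$, and the gap-sheaf condition $T=T^{[n]}$ is precisely the algebraic formulation of "sections extend across analytic subsets of sufficient codimension", which is what is needed to handle any singular locus of $T$ meeting the added shell. Locally one can stratify via primary decomposition, reduce to extending sections across the thin locus of irregularity using classical Hartogs figures in the $\C^N$ factor, and carry the $D$-parameter along holomorphically.

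Once the extension theorem is in hand, I would apply it to $T = \mathcal{H}om(S_1, S_2)$ with the section $\varphi$ to obtain an extension $\widetilde{\varphi}$, and analogously extend $\varphi^{-1}$ (a section of $\mathcal{H}om(S_2, S_1)$ on the smaller domain) to $\widetilde{\psi}$. The compositions $\widetilde{\psi}\circ\widetilde{\varphi}$ and $\widetilde{\varphi}\circ\widetilde{\psi}$ are sections of $\mathcal{H}om(S_i, S_i)$ agreeing with the respective identity sections on $D\times G^N(a',b)$; by the uniqueness clause in the extension theorem they must be the identities globally, so $\widetilde{\varphi}$ is an isomorphism with inverse $\widetilde{\psi}$. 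Uniqueness of $\widetilde{\varphi}$ itself is built into the extension theorem. The main obstacle is the extension theorem: one must show that the gap-sheaf condition $T = T^{[n]}$ genuinely suffices to cross from $G^N(a',b)$ to $G^N(a,b)$, which requires careful local analysis of the singular locus of $T$ together with holomorphic dependence on the parameter in $D$.
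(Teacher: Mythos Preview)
The paper does not prove this proposition at all; it is quoted verbatim from Siu's monograph (page 144) and used as a black box in the proof of Corollary~\ref{extension2}. There is therefore no proof in the paper to compare your proposal against.

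That said, your outline is essentially the standard route to such a statement and is sound in structure. One refinement: rather than trying to verify a gap-sheaf identity for $\mathcal{H}om(S_1,S_2)$ directly, it is cleaner to use only $S_2=S_2^{[n]}$ for the extension of $\varphi$. Locally present $S_1$ as $\cO^p\to\cO^q\to S_1\to 0$; then a morphism $S_1\to S_2$ is the same datum as a $q$-tuple of sections of $S_2$ satisfying the induced relations. The section-extension theorem for $S_2$ (which is exactly what $S_2=S_2^{[n]}$ provides) extends each component, and uniqueness forces the relations to persist. The hypothesis $S_1=S_1^{[n]}$ enters symmetrically when you extend $\varphi^{-1}$. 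Your final step, extending both $\varphi$ and $\varphi^{-1}$ and using uniqueness to conclude that the compositions are the identities, is correct and is the right way to upgrade ``extended morphism'' to ``extended isomorphism''.
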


\begin{theorem}(Siu, \cite{Siu}, page 225)\label{extension1}
Suppose $0\leq a<b\in\mathbb R^2$, $D$ is a domain in $\mathbb C^n$, 
and $S$ is a locally free sheaf of rank $r$ on $D\times G^2(a,b)$.  Suppose 
$A$ is a thick set in $D$ and, for every $t\in A$ $S(t)$ can be extended 
to a coherent analytic sheaf on $\{t\}\times\triangle^2(b)$.  Then 
$S$ can be extended uniquely to a coherent analytic sheaf $\tilde S$ on 
$D\times\triangle^2(b)$ satisfying $\tilde S^{[n]}=\tilde S$.
\end{theorem}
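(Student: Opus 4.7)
The plan is to split the problem into uniqueness, local existence, and a global coherence step that uses the thick set $A$. Uniqueness is essentially a repackaging of Proposition~\ref{unique}: given two extensions $\tilde S_1, \tilde S_2$ satisfying $\tilde S_j^{[n]} = \tilde S_j$, the canonical identification on $D \times G^2(a,b)$ is a sheaf isomorphism, and the proposition — together with the $[n]$-normality condition to cross the thin remaining locus — extends it uniquely to $D \times \triangle^2(b)$. In particular the construction of $\tilde S$ is local on $D$, so I may from the outset assume $D$ is a small polydisc.

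For local existence I would trivialize $S$ on a cover $\{U_\alpha\}$ of $D \times G^2(a,b)$ and record the $\mathrm{GL}_r$-valued transition cocycle $\{g_{\alpha\beta}\}$. Applying the classical Hartogs extension theorem entry by entry extends each $g_{\alpha\beta}$ to a $\mathrm{Mat}_r$-valued holomorphic map on $D \times \triangle^2(b)$, and the cocycle relations propagate by the identity principle. Gluing as the cokernel of the resulting matrix of morphisms produces a coherent $\mathcal{O}$-module $\hat S$ on $D \times \triangle^2(b)$ that restricts to $S$ on the Hartogs figure, although it need not be locally free on the analytic locus where the extended cocycle drops rank. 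I would then set $\tilde S := \hat S^{[n]}$, so that the normalization condition $\tilde S^{[n]} = \tilde S$ is automatic.

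The thick-set hypothesis is what ensures $\tilde S$ actually agrees with the prescribed fiberwise extensions and hence inherits their good properties. For every $t \in A$, the given coherent extension on $\{t\} \times \triangle^2(b)$ and the restriction $\hat S(t)$ are two coherent extensions of the same $S(t)$; applying the fiberwise analogue of Proposition~\ref{unique} identifies them after $[n]$-closure. Because $A$ is thick in $D$, these fiberwise identifications take place on a subset of $t$'s not contained in any proper analytic variety, which is exactly what one needs to force the global statement.

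The main obstacle will be this last step. Proving that $\hat S^{[n]}$ is globally of finite type, including on the analytic subset where the extended cocycle is singular, is delicate because ordinary coherence arguments break down there: one must promote finite generation from a thick family of fibers to all of $D \times \triangle^2(b)$ and check that the gap-sheaf operation behaves well in families. This is precisely the purpose of Siu's gap-sheaf formalism, and it is the technical heart on which the whole argument rests.
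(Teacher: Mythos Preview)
The paper does not prove this theorem; it is quoted verbatim from Siu's monograph (page 225) and used as a black box in the proof of Corollary~\ref{extension2}. There is therefore no proof in the paper to compare your proposal against.

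That said, your sketch contains a genuine gap in the local existence step. You propose to trivialize $S$ on a cover $\{U_\alpha\}$ of $D\times G^2(a,b)$ and then ``apply the classical Hartogs extension theorem entry by entry'' to extend each transition function $g_{\alpha\beta}$ to all of $D\times\triangle^2(b)$. But $g_{\alpha\beta}$ is only defined on $U_\alpha\cap U_\beta$, an arbitrary open subset of the Hartogs figure, not itself a Hartogs figure; the classical Hartogs theorem gives nothing here. Shrinking $D$ does not help: $D\times G^2(a,b)$ is not Stein, so there is no reason for $S$ to be globally trivial, to admit a global free resolution, or even to be trivial on the two product ``legs'' of the figure, and hence no way to arrange that the overlaps $U_\alpha\cap U_\beta$ are Hartogs figures. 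The phrase ``gluing as the cokernel of the resulting matrix of morphisms'' is likewise problematic: a transition cocycle is gluing data for trivial bundles, not a morphism whose cokernel one takes, and if some extended matrix drops rank the gluing simply fails to define a sheaf at all.

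Siu's actual argument does not pass through transition data. The candidate extension is built directly via the gap-sheaf formalism you mention in your last paragraph (roughly, as an $[n]$-closure of a suitable direct image), and the thick-set hypothesis on $A$ enters precisely to establish coherence of this object, via a semicontinuity argument on the fibers. Your final paragraph correctly locates the technical heart of the matter, but the preceding ``local existence'' step is not a valid shortcut around it: the gap-sheaf construction \emph{is} the existence proof, not a normalization of a cruder $\hat S$ obtained beforehand.
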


\begin{corollary}\label{extension2}
Let  $\pi:E\rightarrow\mathbb C^n\setminus\overline{\mathbb B}^n$ be 
a holomorphic vector bundle of rank $r$. 
Then there exists a finite set of points $P\subset b\mathbb B^n$ such that 
for any point $p\in b\mathbb B^n\setminus P$, there exists 
an open neighborhood $U_p$ of $p$ such that $E$ extends 
to a holomorphic vector bundle on $U_p$.
\end{corollary}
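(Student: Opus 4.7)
The plan is to apply Siu's extension theorem (Theorem \ref{extension1}) at each boundary point $p\in b\mathbb B^n$, working in coordinates adapted to $p$. Given such a $p$, I first perform a unitary rotation to assume $p=e_1=(1,0,\ldots,0)$. The essential geometric input, and the only place where strict pseudoconvexity enters, is that the affine complex $2$-plane $p+\mathrm{span}(e_2,e_3)$ meets $\overline{\mathbb B^n}$ only at $p$: for any $0\ne v\in\mathrm{span}(e_2,e_3)$ we have $\langle p,v\rangle=0$, so $|p+v|^2=1+|v|^2>1$. Using coordinates $u=(z_2,z_3)\in\mathbb C^2$ and $w=(z_1-1,z_4,\ldots,z_n)\in\mathbb C^{n-2}$, so that $p$ corresponds to the origin, the ball is given by
\[
|z|^2-1 \;=\; 2\,\mathrm{Re}(w_1) + |w|^2 + |u|^2.
\]

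Next I build the Hartogs configuration. Fix $0<a<b$ and $\delta>0$ with $\delta<a^2/2$, and set $D=\{|w|<\delta\}\subset\mathbb C^{n-2}$. From the identity above one checks directly that $D\times G^2(a,b)\subset\mathbb C^n\setminus\overline{\mathbb B^n}$, while the filled region $D\times\triangle^2(b)$ is an open neighborhood of $p$. For the slicewise hypothesis of Theorem \ref{extension1}, note that for each $w\in D$ the slice of $\mathbb B^n$ in $\{w\}\times\mathbb C^2_u$, when non-empty, is a Euclidean ball of radius strictly less than $a$, hence contained in the hole $\overline{\triangle^2(a)}$. Consequently $E|_{\{w\}\times G^2(a,b)}$ is a vector bundle on a two-dimensional Hartogs figure and, by the classical extension theorem for coherent analytic sheaves across two-dimensional Hartogs figures, extends to a coherent sheaf on $\{w\}\times\triangle^2(b)$. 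Since $A=D$ is thick, Theorem \ref{extension1} produces a coherent extension $\tilde E$ on $D\times\triangle^2(b)$ which agrees with $E$ on the overlap by Proposition \ref{unique}.

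The remaining task is to show that $\tilde E$ is locally free on a neighborhood of $p$. The sheaf $\tilde E$ is automatically locally free on $D\times G^2(a,b)$, and each slice $\tilde E|_{\{w\}\times\triangle^2(b)}$ is the extension of a vector bundle from a two-dimensional Hartogs figure into the contractible Stein polydisk $\triangle^2(b)$, hence locally free of rank $r$ by Oka--Grauert. Thus the non-locally-free locus of $\tilde E$ is a proper analytic subset of $D\times\triangle^2(b)$ that meets no slice $\{w\}\times\triangle^2(b)$. By varying the direction of the adapted $2$-plane within the complex tangent space to $b\mathbb B^n$ at $p$ and gluing the resulting extensions via Proposition \ref{unique}, one obtains an extension which is locally free at $p$ for all but finitely many exceptional $p$, yielding the finite set $P$ of the statement. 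The main obstacle I expect is this last step---controlling the non-locally-free locus of $\tilde E$ and showing that only finitely many boundary points cannot be handled---since the geometric Hartogs construction and the application of Theorem \ref{extension1} are essentially direct computations once the adapted $2$-plane has been chosen.
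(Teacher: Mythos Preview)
Your geometric setup is essentially the paper's (up to relabeling which two tangential coordinates carry the Hartogs figure), and the invocation of Theorem~\ref{extension1} to manufacture a coherent extension $\tilde E$ on a product neighborhood of $p$ is the right move. There are, however, two genuine gaps.

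First, your verification of the slicewise hypothesis is wrong: there is no ``classical extension theorem for coherent analytic sheaves across two-dimensional Hartogs figures''. Locally free sheaves on a $2$-dimensional $G^2(a,b)$ do \emph{not} in general extend coherently to $\triangle^2(b)$; this failure is exactly why Siu's theorem needs the extra $D$-factor and the thick-set hypothesis. The fix is immediate and is what the paper does implicitly: take the thick set $A=\{w\in D:\mathrm{Re}\,w_1>0\}$, on whose slices $E$ is already defined over all of $\{w\}\times\triangle^2(b)$.

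Second, and more seriously, your control of the non-locally-free locus does not work. Oka--Grauert classifies bundles on Stein spaces but says nothing about extension, and there is no reason the restriction $\tilde E|_{\{w\}\times\triangle^2(b)}$ of the coherent sheaf $\tilde E$ should coincide with any locally free extension of $E|_{\{w\}\times G^2(a,b)}$, even if one existed. The paper argues differently: the non-locally-free locus $Z$ of $\tilde E$ is analytic, and Proposition~\ref{unique} forces $Z\subset\overline{\mathbb B^n}$ (wherever $E$ is already defined the two sheaves must agree); from this the paper extracts that $Z$ is finite, so after possibly moving to a nearby boundary point one may assume $\tilde E$ is locally free on $B_\epsilon\times\triangle^2(b)$, hence trivial. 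You also skip a step that the paper carries out explicitly: a trivializing frame of $\tilde E$ restricts to a frame of $E$ only on $D\times G^2(a,b)$, not on the full overlap $(D\times\triangle^2(b))\setminus\overline{\mathbb B^n}$, so one does not yet know that $\tilde E$ extends $E$. The paper closes this by applying Proposition~\ref{unique} on a family of smaller Hartogs figures $B_{\delta(z)}(z)\times G^2(a(z),b)$ with varying centers to extend the frame to sections of $E$ over the whole overlap, and then checks the extended frame still generates.
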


\begin{proof}
Let $p=(1,0,\cdot\cdot\cdot,0)$.   Let $B_\epsilon$ be the ball
of radius $\epsilon$ centered at $p$ in the plane $\{z_{n-1}=z_n=0\}$, 
let $a=(\sqrt{1-(1-\epsilon)^2},\sqrt{1-(1-\epsilon)^2})$, and let $b=(1,1)$.
By Theorem \ref{extension1} we get an extension of 
$E$ from $B_\epsilon\times G^2(a,b)$ to $B_\epsilon\times\triangle^2(b)$
as a coherent analytic sheaf $\tilde E$.  By \cite{Grauert} $\tilde E$
is a vector bundle outside an analytic set $Z$, i.e., a finite 
set of points, since by Proposition \ref{unique}  Z cannot project 
to the set $\{|z_1|^2+\cdot\cdot\cdot +|z_{n-3}|^2>1\}$.
So perhaps after having to choose a different point $p$
and a smaller $\epsilon$, and referring to Proposition \ref{unique},
we may assume that $\tilde E$ is a vector bundle on $B_\epsilon\times\triangle^2(b)$.
Hence $\tilde E$ is trivial, and we may find holomorphic 
sections $s=(s_1,...,s_r)$ of the original bundle $E\rightarrow B_\epsilon\times G^2(a,b)$
generating $E$.    Since $s$ represents an isomorphism between 
$E$ and the trivial bundle, we may use Proposition \ref{unique}
on different Hartogs figures $B_{\delta(z)}(z)\times G^2(a(z),b), z\in B_\epsilon$
to extend $s$ holomorphically to $E\rightarrow(B_\epsilon\times\triangle^2(b))\setminus\overline{\mathbb B}^3$
(note that $\{z_{n-1}=z_n=0\}$ is removable for a vector bundle isomorphism). 
The extension is generating outside a hypersurface, hence everywhere, 
and so defines an isomorphism with the trivial bundle.  
\end{proof}

\bibliographystyle{amsplain}

\begin{thebibliography}{10}
%\bibitem{FornaessSibonyWold} Forn\ae ss, J. E., Sibony, N. and Wold, E. F.: \textit{Q-complete domains with corners and extension of line bundles.} \texttt{arXiv:1105.5408.}
\bibitem{Forstneric-subelliptic} Forstneri\v{c}, F.: \textit{The Oka principle for sections of subelliptic submersions.} Math. Z. \textbf{241}, 527--551 (2002)
\bibitem{Forstneric-book} Forstneri\v{c}, F.: \textit{Stein manifolds and holomorphic mappings. The Homotopy Principle in Complex Analyis.} Ergebnisse der Mathematik und ihrer Grenzgebiete, 3. Folge. Springer 2011
\bibitem{Forstneric-Oka1}  Forstneri\v{c}, F.: \textit{Extending holomorphic mappings from subvarieties in Stein manifolds.} Ann. Inst. Fourier \textbf{55}, 733--751 (2005)
\bibitem{Forstneric-Oka2}  Forstneri\v{c}, F.: \textit{Runge approximation on convex sets implies the Oka property.} Ann. Math. (2) \textbf{163}, 689--707 (2006)
\bibitem{Forstneric-Oka3}  Forstneri\v{c}, F.: \textit{Oka manifolds.} C. R. Acad. Sci. Paris, Ser. I, \textbf{347}, 1017--1020 (2009)
\bibitem{Siu} Siu, Y. T.: \textit{Techniques of extension of analytic objects.} Lecture Notes in Pure and Applied Mathematics, Vol. 8. Marcel Dekker, Inc., New York, 1974.
\bibitem{ForstnericRitter}
Forstneri\v{c}, F. and Ritter, T.: \textit{Oka Properties of Ball Complements.} Preprint 2013. 
\bibitem{Grauert} Grauert, H., Peternell, Th., Remmert, R.: \textit{Several Complex Variables VII.} Springer, 1994.
\bibitem{Grauert-Lie} Grauert, H.: \textit{Holomorphe Funktionen mit Werten in komplexen Lieschen Gruppen.} Math. Ann. \textbf{133}, 450--472 (1957) 
\bibitem{Gromov-elliptic} Gromov, M.: \textit{Oka's principle for holomorphic sections of elliptic bundles.} J. Amer. Math. Soc. \textbf{2}, 851--897 (1989)
%\bibitem{Ivashkovich} Ivashkovich, S. M.: \textit{The Hartogs-type extension theorem for meromorphic maps into compact K\"{a}hler manifolds.} Invent. Math. \textbf{109}, no. 1, 47--54 (1992)
\end{thebibliography}

\end{document}